\newtheorem{theorem}{Theorem}[section]
\newtheorem{proposition}[theorem]{Proposition}
\newtheorem{lemma}[theorem]{Lemma}
\newtheorem{definition}[theorem]{Definition}
\newtheorem{example}[theorem]{Example}
\newtheorem{remark}[theorem]{Remark}
\numberwithin{equation}{section}
\newcommand{\C}{{\mathbb{C}}}
\newcommand{\N}{{\mathbb{N}}}
\newcommand{\R}{{\mathbb{R}}}
\newcommand{\Z}{{\mathbb{Z}}}
\begin{document}

\title{Phase retrieval in infinite-dimensional Hilbert spaces}

\author[1]{Jameson Cahill} 
\author[2]{Peter G. Casazza \thanks{The second author was supported by  NSF DMS 1609760; NSF ATD 1321779; and ARO  W911NF-16-1-0008}}
\author[3]{Ingrid Daubechies \thanks{The third author was supported by AFOSR grant 00002113-02; ONR grant N00014-11-1-0714-06-7; and NSF grant DMS-1516988}}

\affil[1]{Department of Mathematical Sciences, New Mexico State University}
\affil[2]{Department of Mathematics, University of Missouri}
\affil[3]{Department of Mathematics, Duke University}

\date{\today}

\maketitle
\begin{abstract}
The main result of this paper states that phase retrieval in infinite-dimensional Hilbert spaces is never uniformly stable, in sharp contrast to the finite dimensional setting in which phase retrieval is always stable. This leads us to derive stability results for signals depending on how well they are approximated by finite expansions.
\end{abstract}

\vspace{0.35in}

\section{Introduction}

Given a separable Hilbert space $\mathcal{H}$, phase retrieval deals with the problem of recovering an unknown $f\in\mathcal{H}$ from a set of intensity measurements $(|\langle f,\varphi_n\rangle|)_{n\in I}$ for some countable collection $\Phi=\{\varphi_n\}_{n\in I}\subseteq\mathcal{H}$. Note that if $f=\alpha g$ with $|\alpha|=1$ then $|\langle f,\varphi_n\rangle|=|\langle g,\varphi_n\rangle|$ for every $n\in I$ regardless of our choice of $\Phi$; we say $\Phi$ \textit{does phase retrieval} if the converse of this statement is true, \textit{i.e.}, if the equalities $|\langle f,\varphi_n\rangle|=|\langle g,\varphi_n\rangle|$ for every $n$ imply that there is a unimodular scalar $\alpha$ so that $f=\alpha g$.

We will generally assume that $\Phi$ forms a \textit{frame} for $\mathcal{H}$, \textit{i.e.}, there are positive constants $0<A\leq B<\infty$ so that
$$
A\|f\|^2\leq\sum_{n\in I}|\langle f,\varphi_n\rangle|^2\leq B\|f\|^2
$$
for every $f$ in $\mathcal{H}$.  We call the operator $T_{\Phi}:\mathcal{H}\rightarrow\ell^2(I)$ given by
$$
T_{\Phi}(f)=(\langle f,\varphi_n\rangle)_{n\in I}
$$
the \textit{analysis operator} of $\Phi$. We denote by $\mathcal{A}_{\Phi}:\mathcal{H}\rightarrow\ell^2(I)$ the nonlinear mapping given by
$$
\mathcal{A}_{\Phi}(f)=(|\langle f,\varphi_n\rangle|)_{n\in I},
$$
so that $\Phi$ does phase retrieval if and only if $\mathcal{A}_{\Phi}$ is injective on $\mathcal{H}/\sim$ where $f\sim g$ if $f=\alpha g$ with $|\alpha|=1$.

\begin{definition}
We say a frame $\{\varphi_n\}_{n\in I}$ for a Hilbert space $\mathcal{H}$ has the complement property if for every subset $S\subseteq I$ we have
$$
\overline{\mathrm{span}}\{\varphi_n\}_{n\in S}=\mathcal{H}\text{ or }\overline{\mathrm{span}}\{\varphi_n\}_{n\not\in S}=\mathcal{H}.
$$
\end{definition}

\begin{theorem}\label{CP}
(a) Let $\mathcal{H}$ be a separable Hilbert space and let $\Phi$ be a frame for $\mathcal{H}$.  If $\Phi$ does phase retrieval then $\Phi$ has the complement property. \\ 
(b) Let $\mathcal{H}$ be a separable Hilbert space over the real numbers and let $\Phi$ be a frame for $\mathcal{H}$. If $\Phi$ has the complement property then $\Phi$ does phase retrieval.
\end{theorem}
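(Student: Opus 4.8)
The plan is to prove Theorem \ref{CP} in two parts, mirroring its statement.

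\textbf{Part (a).} I would argue by contraposition: suppose $\Phi$ fails the complement property, so there is a subset $S\subseteq I$ with $\overline{\mathrm{span}}\{\varphi_n\}_{n\in S}\neq\mathcal{H}$ and $\overline{\mathrm{span}}\{\varphi_n\}_{n\notin S}\neq\mathcal{H}$. Then I can pick a nonzero $u\perp\varphi_n$ for all $n\in S$ and a nonzero $v\perp\varphi_n$ for all $n\notin S$. Setting $f=u+v$ and $g=u-v$, I would check that $|\langle f,\varphi_n\rangle|=|\langle g,\varphi_n\rangle|$ for every $n$: for $n\in S$ both inner products equal $\langle v,\varphi_n\rangle$, and for $n\notin S$ both equal $\langle u,\varphi_n\rangle$ up to sign. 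Since $u,v$ are nonzero and (being in orthogonal complements of spanning-deficient sets, but not necessarily orthogonal to each other — here one should be slightly careful) one can arrange $f\not\sim g$; indeed if $f=\alpha g$ with $|\alpha|=1$ then $u+v=\alpha u-\alpha v$, forcing $(1-\alpha)u=-(1+\alpha)v$, and since at least one of $u,v$ can be chosen outside the span of the other (or one simply notes $u,v$ are linearly independent as they lie in complementary nonzero subspaces — if they happened to be dependent, rescale $v$), this fails. Hence $\mathcal{A}_\Phi$ is not injective on $\mathcal{H}/\sim$, so $\Phi$ does not do phase retrieval.

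\textbf{Part (b).} Here $\mathcal{H}$ is real. I would again argue by contraposition, or directly: suppose $|\langle f,\varphi_n\rangle|=|\langle g,\varphi_n\rangle|$ for all $n$. Partition $I$ into $S=\{n:\langle f,\varphi_n\rangle=\langle g,\varphi_n\rangle\}$ and $S^c=\{n:\langle f,\varphi_n\rangle=-\langle g,\varphi_n\rangle\}$ (using that over the reals the sign is the only ambiguity). Then $f-g\perp\varphi_n$ for all $n\in S$ and $f+g\perp\varphi_n$ for all $n\in S^c$. By the complement property, either $\overline{\mathrm{span}}\{\varphi_n\}_{n\in S}=\mathcal{H}$, which forces $f-g=0$, or $\overline{\mathrm{span}}\{\varphi_n\}_{n\in S^c}=\mathcal{H}$, which forces $f+g=0$; in either case $f=\pm g$, i.e. $f\sim g$. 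So $\Phi$ does phase retrieval.

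\textbf{Main obstacle.} The routine-looking steps are genuinely routine here; the one point that needs real care is the linear-independence/non-equivalence argument at the end of Part (a), and, more subtly, the closure issues: in infinite dimensions ``$\overline{\mathrm{span}}\{\varphi_n\}_{n\in S}\neq\mathcal{H}$'' is what gives a nonzero orthogonal vector (via a Hahn--Banach/orthogonal-complement argument), so I must phrase everything in terms of \emph{closed} spans and orthogonal complements rather than algebraic spans. The asymmetry between (a) (any Hilbert space) and (b) (real only) is exactly the classical one — over $\C$ there are more phase ambiguities than the single sign, so the partition into $S$ and $S^c$ in Part (b) genuinely uses the real structure, and I would flag that (b) is false over $\C$ in general.
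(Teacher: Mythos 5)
Your proof follows the same route as the paper's in both parts: in (a) you take $u$ orthogonal to $\{\varphi_n\}_{n\in S}$ and $v$ orthogonal to $\{\varphi_n\}_{n\notin S}$ and compare $u+v$ with $u-v$; in (b) you partition $I$ according to whether $\langle f,\varphi_n\rangle=\langle g,\varphi_n\rangle$ or $\langle f,\varphi_n\rangle=-\langle g,\varphi_n\rangle$ (the paper phrases (b) contrapositively, but it is the same argument, and your part (b) is fine as written).

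The one place where your argument wobbles is exactly the point you flagged in (a): why $u+v\not\sim u-v$. Neither of your proposed justifications works. The subspaces $\bigl(\overline{\mathrm{span}}\{\varphi_n\}_{n\in S}\bigr)^{\perp}$ and $\bigl(\overline{\mathrm{span}}\{\varphi_n\}_{n\notin S}\bigr)^{\perp}$ are not ``complementary'' in any a priori sense and could in principle intersect, so linear independence of $u$ and $v$ does not follow from where they live; and if $u$ and $v$ were linearly dependent, rescaling $v$ would leave them dependent. The missing ingredient --- and the one the paper uses --- is the frame hypothesis itself: if $u=\lambda v$ with $\lambda\neq 0$, then $u$ would be orthogonal to $\varphi_n$ for every $n\in I$, and the lower frame bound (i.e.\ completeness of $\Phi$) forces $u=0$, a contradiction. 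Hence $u$ and $v$ are automatically linearly independent, so $u\pm v\neq 0$, and $u+v=\alpha(u-v)$ would force $1-\alpha=0$ and $1+\alpha=0$ simultaneously, which is impossible. With that one-line repair your proof is complete and identical in substance to the paper's.
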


\begin{proof}
(a) Suppose $\Phi$ does not have the complement property and find $S\subseteq I$ so that neither $\{\varphi_n\}_{n\in S}$ nor $\{\varphi_n\}_{n\not\in S}$ spans $\mathcal{H}$. Then we can find nonzero $f,g\in\mathcal{H}$ so that $\langle f,\varphi_n\rangle=0$ for all $n\in S$ and $\langle g,\varphi_n\rangle=0$ for all $n\not\in S$. Also, since $\Phi$ is a frame we know that $f\neq\lambda g$ for any scalar $\lambda$, so in particular we know $f+g\neq 0$ and $f-g\neq 0$. It now follows that $|\langle f+g,\varphi_n\rangle|=|\langle f-g,\varphi_n\rangle|$ for all $n\in I$ but $f+g\neq\lambda(f-g)$ for any scalar $\lambda$, so $\Phi$ does not do phase retrieval.

(b) Suppose $\Phi$ does not do phase retrieval and find nonzero $f,g\in\mathcal{H}$ so that $|\langle f,\varphi_n\rangle|=|\langle g,\varphi_n\rangle|$ for every $n\in I$, but $f\neq\pm g$. Since $\mathcal{H}$ is a real Hilbert space this means that $\langle f,\varphi_n\rangle=\pm\langle g,\varphi_n\rangle$, so let $S=\{n\in I:\langle f,\varphi_n\rangle=\langle g,\varphi_n\rangle\}$. Then $f-g\neq 0$ but $\langle f-g,\varphi_n\rangle=0$ for every $n\in S$ so $\overline{\mathrm{span}}\{\varphi_n\}_{n\in S}\neq\mathcal{H}$, and similarly $f+g\neq 0$ but $\langle f+g,\varphi_n\rangle=0$ for every $n\not\in S$ so $\overline{\mathrm{span}}\{\varphi_n\}_{n\not\in S}\neq\mathcal{H}$, which means $\Phi$ does not have the complement property.
\end{proof}

This theorem was originally proved in \cite{BCE06} where it was only stated in the finite-dimensional case, but the proof still holds in infinite dimensions without any modifications. The original proof of part (a) presented in \cite{BCE06} did not give the correct conclusion in the case where $\mathcal{H}$ is a Hilbert space over the complex numbers. This was observed by the authors of \cite{BCMN14} where they presented a much more complicated proof for this case. It turns out that the proof presented in \cite{BCE06} does hold in this case with only minor modifications, which is the proof presented above.

We remark here that recently several papers have been devoted to showing that certain frames do phase retrieval for infinite-dimensional spaces over both the real and complex numbers, so by Theorem \ref{CP} all of these frames have the complement property. For instance, in \cite{T11} it is shown that a real-valued band-limited signal can be recovered up to sign from the absolute values of its samples at any rate greater than twice the Nyquist rate. A similar result for complex valued band-limited signals was shown in \cite{PYB14} which required a minimal oversampling rate of four times the Nyquist rate.

In \cite{MW14} the authors study an instance of the phase retrieval problem using the Cauchy wavelet transform to recover analytic functions in $L^2(\mathbb{R},\mathbb{C})$ that have compactly supported Fourier transforms. In that paper they observe that although they are able to show that $\mathcal{A}_{\Phi}$ is injective (for the particular choice of $\mathcal{H}$ and $\Phi$) there is an inherent lack of robustness in the sense that arbitrarily small perturbations of the measurements $\mathcal{A}_{\Phi}(f)$ can result in large errors in the reconstructed signal (see sections 4.1 and 4.2 in \cite{MW14}). The main result of the present paper states that this type of lack of robustness is unavoidable when doing phase retrieval in an infinite-dimensional Hilbert space.

In this paper, we restrict ourselves to the case of countably infinite frames in Hilbert
spaces; in work extending the present results, \cite{AG} proves similar lack of robustness
for phase retrieval in infinite-dimensional Banach spaces with infinite frames that need not be 
countable. 

One way to quantify the robustness of the phase retrieval process for a given frame $\Phi$ is in terms of the lower Lipschitz bound of the map $\mathcal{A}_{\Phi}$ with respect to some metric on the space $\mathcal{H}/\sim$. A natural choice of metric is the quotient metric induced by the metric on $\mathcal{H}$ given by
$$
d(\tilde{f},\tilde{g})=\inf_{|\alpha|=1}\|f-\alpha g\|.
$$

We would like to find a positive constant $C$ (depending only on $\Phi$) so that for every $f,g\in\mathcal{H}$
\begin{equation}\label{Lipschitz}
\inf_{|\alpha|=1}\|f-\alpha g\|\leq C\|\mathcal{A}_{\Phi}(f)-\mathcal{A}_{\Phi}(g)\|.
\end{equation}

In \cite{BCMN14} the authors introduced a numerical version of the complement property as a means of quantifying the constant $C$ in \eqref{Lipschitz}:

\begin{definition}
We say a frame $\{\varphi_n\}_{n\in I}$ has the $\sigma$-strong complement property if for every subset $S\subseteq I$ either $\{\varphi_n\}_{n\in S}$ or $\{\varphi_n\}_{n\not\in S}$ is a frame for $\mathcal{H}$ with lower frame bound at least $\sigma$.
\end{definition}

In \cite{BCMN14} it is shown that when $\mathcal{H}=\mathbb{R}^M$ the lower Lipschitz bound of $\mathcal{A}_{\Phi}$ is precisely controlled by the largest $\sigma$ for which $\Phi$ has the $\sigma$-strong complement property (see also \cite{BY15}). Although this result does not apply to the complex case, much like the complement property cannot be used to determine whether a given frame does phase retrieval for a complex space, we still have the following result in the finite-dimensional case:

\begin{proposition}\label{finite}
If $\mathcal{H}$ is a finite-dimensional Hilbert space and $\Phi=\{\varphi_n\}_{n\in I}$ does phase retrieval for $\mathcal{H}$ then $\mathcal{A}_{\Phi}$ has a positive lower Lipschitz bound, i.e., $\mathcal{A}_{\Phi}$ satisfies \eqref{Lipschitz} for some $C<\infty$.
\end{proposition}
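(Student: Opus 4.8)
The plan is to \emph{lift} the problem to the finite-dimensional real vector space $\mathcal{S}$ of self-adjoint operators on $\mathcal{H}$, where the loss of compactness that obstructs the argument in infinite dimensions does not occur. Set $M=\dim\mathcal{H}$, equip $\mathcal{S}$ with the Hilbert--Schmidt inner product $\langle\cdot,\cdot\rangle_{HS}$, and for $f\in\mathcal{H}$ write $Q_f$ for the rank-one operator $x\mapsto\langle x,f\rangle f$. Two elementary facts are the backbone of the argument: $Q_f=Q_g$ if and only if $f=\alpha g$ for some unimodular scalar $\alpha$; and $|\langle f,\varphi_n\rangle|^2=\langle Q_f,Q_{\varphi_n}\rangle_{HS}$ for all $n$. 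Define the bounded, real-linear map $L\colon\mathcal{S}\to\ell^2(I)$ by $L(A)=(\langle A,Q_{\varphi_n}\rangle_{HS})_{n\in I}$; here one uses that the upper frame bound forces $\|\varphi_n\|^2\le B$ for every $n$. Then $\mathcal{A}_{\Phi}(f)=\mathcal{A}_{\Phi}(g)$ is equivalent to $L(Q_f-Q_g)=0$, so by the first fact, $\Phi$ does phase retrieval if and only if $L$ is injective on the set $D:=\{\,Q_f-Q_g : f,g\in\mathcal{H}\,\}$.

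The heart of the proof is the observation that $D$ is precisely the set of self-adjoint operators having at most one strictly positive and at most one strictly negative eigenvalue, and that this set is a \emph{closed} cone in $\mathcal{S}$: ordering the eigenvalues $\lambda_1(A)\ge\cdots\ge\lambda_M(A)$, we have $D=\{A:\lambda_2(A)\le 0\}\cap\{A:\lambda_{M-1}(A)\ge 0\}$, an intersection of closed sets since eigenvalues depend continuously on $A$. Hence $D\cap\{\|A\|_{HS}=1\}$ is compact; since $L$ is continuous and, by the hypothesis that $\Phi$ does phase retrieval, does not vanish on it, there is $c>0$ with $\|L(A)\|_{\ell^2}\ge c\|A\|_{HS}$ for every $A\in D$. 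Taking $A=Q_f-Q_g$ gives $\|L(Q_f-Q_g)\|_{\ell^2}\ge c\|Q_f-Q_g\|_{HS}$ for all $f,g\in\mathcal{H}$.

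It then remains to bound $\|L(Q_f-Q_g)\|_{\ell^2}$ above by $\|\mathcal{A}_{\Phi}(f)-\mathcal{A}_{\Phi}(g)\|_{\ell^2}$ and $\|Q_f-Q_g\|_{HS}$ below by $d(\tilde f,\tilde g)$, up to matching weights. For the first, the factorization $|a^2-b^2|=|a-b|\,(a+b)$ applied coordinatewise, together with $|\langle f,\varphi_n\rangle|^2+|\langle g,\varphi_n\rangle|^2\le B(\|f\|^2+\|g\|^2)$, yields
$$
\|L(Q_f-Q_g)\|_{\ell^2}\le\sqrt{2B\,(\|f\|^2+\|g\|^2)}\;\|\mathcal{A}_{\Phi}(f)-\mathcal{A}_{\Phi}(g)\|_{\ell^2}.
$$
For the second, the identities $\|Q_f-Q_g\|_{HS}^2=\|f\|^4+\|g\|^4-2|\langle f,g\rangle|^2$ and $d(\tilde f,\tilde g)^2=\|f\|^2+\|g\|^2-2|\langle f,g\rangle|$ reduce, after normalizing $\|f\|^2+\|g\|^2=1$ by homogeneity, to the elementary scalar inequality $1+2c-4c^2\ge 4a^2b^2$ for $a^2+b^2=1$ and $0\le c\le ab$ (a short calculation, using concavity of $c\mapsto 1+2c-4c^2$ on $[0,ab]$); this gives $\|Q_f-Q_g\|_{HS}^2\ge\tfrac12(\|f\|^2+\|g\|^2)\,d(\tilde f,\tilde g)^2$. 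Chaining the four inequalities, the weights $\|f\|^2+\|g\|^2$ cancel and one obtains \eqref{Lipschitz} with $C=2\sqrt{B}/c$; the degenerate case $f=g=0$ is trivial.

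The single delicate point is the identification of $D$ with a closed cone together with the ensuing compactness argument that produces $c>0$; the rest is routine estimation. This is also precisely the step that fails in infinite dimensions — there the relevant unit sphere in the operator space is no longer compact — so the proposition should be read as a finite-dimensional phenomenon whose failure in general is the subject of the paper's main theorem.
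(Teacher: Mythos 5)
Your proof is correct and follows essentially the same route as the paper's: lift to the real space of Hermitian operators, use compactness of the (closed, scale-invariant) set of rank-$\le 2$ differences $Q_f-Q_g$ on the unit sphere to extract the constant $c>0$, control the squared measurements via $|a^2-b^2|=|a-b|(a+b)$ together with $\|\varphi_n\|^2\le B$, and bound $\|Q_f-Q_g\|_{HS}$ below by the quotient distance via the eigenvalue/trace identities. The only differences are cosmetic --- you work with the Hilbert--Schmidt norm and the cone $D$ of matrices with at most one positive and one negative eigenvalue rather than all rank-$\le 2$ Hermitian matrices, and you track explicit weights instead of normalizing $\|f\|=1$, $\|g\|\le 1$ --- so no further comparison is needed.
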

\begin{proof}
Since this result is already known if $\mathcal{H}$ is a real Hilbert space we will prove it for the case where $\mathcal{H}=\mathbb{C}^N$. Note that the inequality \eqref{Lipschitz} is homogeneous so without loss of generality we can assume that $\|f\|=1$ and $\|g\|\leq 1$.

Let $\mathbb{H}_N$ denote the space of $N\times N$ Hermitian matrices equipped with the Hilbert-Schmidt inner product $\langle X,Y\rangle=\mathrm{Trace}(XY)$. Because of the restriction to Hermitian matrices, this is a Hilbert space over the reals (of dimension $M^2$), and no adjoint is necessary in the definition of $\langle X,Y\rangle$. Define the linear mapping $\mathcal{A}_{\Phi}^2:\mathbb{H}_N\rightarrow\ell^2(I)$ by
$$
\mathcal{A}_{\Phi}^2(X)=(|\langle X,\varphi_n\varphi_n^*\rangle|)_{n\in I}=(|\langle X \varphi_n,\varphi_n\rangle|)_{n\in I},
$$
where we denote by $gg^*$ the rank one operator that maps 
$h \in \mathbb{C}^N$ to $\langle h, g\rangle g$.
(Note that if $X$ is rank 1, i.e. $X=ff^*$, then $\mathcal{A}_{\Phi}^2(X)=(|\langle f,\varphi_n\rangle|^2)_{i\in I}$, hence the notation $\mathcal{A}_{\Phi}^2$.) It is well known that $\Phi$ does phase retrieval if and only if $\ker(\mathcal{A}_{\Phi}^2)$ does not contain any matrix of rank 1 or 2 (see Lemma 9 in \cite{BCMN14}). This, together with the compactness of the set $S=\{X\in\mathbb{H}_N:\mathrm{rank}(X)\leq 2,\|X\|=1\}$ (since $\mathbb{H}_N$ is 
finite-dimensional), implies that
$$
\min_{X\in S}\|\mathcal{A}_{\Phi}^2(X)\|=c>0
$$
where $\|X\|$ denotes the operator norm (however, we can choose any norm on $\mathbb{H}_N$ and this will still be true).

For $f,g\in\mathbb{C}^N$, $ff^*-gg^*$ is rank 1 or 2, so we have
\begin{eqnarray}
\notag \|ff^*-gg^*\|^2&\leq&\frac{1}{c^2}\|\mathcal{A}_{\Phi}^2(ff^*-gg^*)\|^2 \\
\notag &=& \frac{1}{c^2}\|\mathcal{A}_{\Phi}^2(ff^*)-\mathcal{A}_{\Phi}^2(gg^*)\|^2 \\
&=&\label{1} \frac{1}{c^2}\sum_{n\in I}(|\langle f,\varphi_n\rangle|^2-|\langle g,\varphi_n\rangle|^2)^2.
\end{eqnarray}
Furthermore, since we are assuming $\|f\|=1$ and $\|g\|\leq 1$ we have
\begin{eqnarray}
\notag \sum_{n\in I}(|\langle f,\varphi_n\rangle|^2-|\langle g,\varphi_n\rangle|^2)^2&=&\sum_{n\in I}(|\langle f,\varphi_n\rangle|-|\langle g,\varphi_n\rangle|)^2(|\langle f,\varphi_n\rangle|+|\langle g,\varphi_n\rangle|)^2 \\
\notag &\leq& \sum_{n\in I}(|\langle f,\varphi_n\rangle|-|\langle g,\varphi_n\rangle|)^2(2\|\varphi_n\|)^2 \\
&\leq&\label{2} (4\max_{n\in I}\|\varphi_n\|^2)\sum_{n\in I}(|\langle f,\varphi_n\rangle|-|\langle g,\varphi_n\rangle|)^2.
\end{eqnarray}

Since we are assuming $\|f\|\geq \|g\|$ a direct computation shows that the largest (in absolute value) eigenvalue of $ff^*-gg^*$ is
\begin{eqnarray*}
\frac{1}{2}(\|f\|^2-\|g\|^2+((\|f\|^2+\|g\|^2)^2-4|\langle f,g\rangle|^2)^{1/2}).
\end{eqnarray*}
Therefore, we have that
\begin{eqnarray*}
\|ff^*-gg^*\|&\geq& \frac{1}{2}((\|f\|^2+\|g\|^2)^2-4|\langle f,g\rangle|^2)^{1/2} \\
&=&\frac{1}{2}(\|f\|^2+\|g\|^2-2|\langle f,g\rangle|)^{1/2}(\|f\|^2+\|g\|^2+2|\langle f,g\rangle|)^{1/2} \\
&=& \frac{1}{2}(\inf_{|\alpha|=1}\|f-\alpha g\|)(\|f\|^2+\|g\|^2+2|\langle f,g\rangle|)^{1/2},
\end{eqnarray*}
and since $\|f\|=1$ this says
\begin{equation}\label{3}
\inf_{|\alpha|=1}\|f-\alpha g\|\leq 2\|ff^*-gg^*\|.
\end{equation}
Finally, combining \eqref{1},\eqref{2}, and \eqref{3} yields \eqref{Lipschitz}.
\end{proof}

\section{Main results}

Before stating the main result we first remark that, when viewed as a subset of $\mathbb{C}^{M\times N}$, the set of frames $\{\varphi_n\}_{n=1}^N$ that do phase retrieval for $\mathbb{C}^M$ is an open subset for each $N$, see \cite{B13,CEHV15}. In fact, in \cite{CEHV15} it is shown that if $N\geq 4M-4$ then this set is open and dense, and it is clear that it must be empty if $N$ is sufficiently small. At this time it is not known if there exists a pair $(M,N)$ where the set of frames consisting of $N$ vectors which do phase retrieval for $\mathbb{C}^M$ is nonempty but not dense (see \cite{V15}), but in any case, the set of frames which do not do phase retrieval is never dense unless it is all of $\mathbb{C}^{M\times N}$. The next statement says that this situation is reversed when we consider frames for an infinite-dimensional space.

\begin{proposition}
Let $\mathcal{H}$ be an infinite-dimensional separable Hilbert space and suppose $\{\varphi_n\}_{n\in\mathbb{N}}$ does phase retrieval.  For every $\epsilon>0$ there is another frame $\{\psi_n\}_{n\in\mathbb{N}}$ which does not do phase retrieval and satisfies
$$
\sum_{n\in\mathbb{N}}\|\varphi_n-\psi_n\|^2<\epsilon.
$$
\end{proposition}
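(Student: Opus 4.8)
The plan is to apply Theorem \ref{CP}(a) in contrapositive form: a frame that does phase retrieval must have the complement property, so it is enough to perturb $\{\varphi_n\}$ slightly into a \emph{frame} $\{\psi_n\}$ that \emph{fails} the complement property. Concretely, I will exhibit a partition $\mathbb{N}=S\cup S^c$ for which, after the perturbation, neither $\{\psi_n\}_{n\in S}$ nor $\{\psi_n\}_{n\in S^c}$ has dense span in $\mathcal{H}$. Writing $A\le B$ for the frame bounds of $\{\varphi_n\}$ and replacing $\epsilon$ by $\min\{\epsilon,A/2\}$ if necessary, I may assume $\epsilon<A$.

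For the construction, fix a unit vector $e\in\mathcal{H}$. Since $\sum_n|\langle\varphi_n,e\rangle|^2\le B<\infty$, the series converges and hence there is a \emph{finite} set $F\subseteq\mathbb{N}$ with $\sum_{n\notin F}|\langle\varphi_n,e\rangle|^2<\epsilon$. Define $\psi_n=\varphi_n$ for $n\in F$ and $\psi_n=\varphi_n-\langle\varphi_n,e\rangle e$ (the orthogonal projection of $\varphi_n$ onto $e^{\perp}$) for $n\notin F$. Then $\|\varphi_n-\psi_n\|^2$ equals $|\langle\varphi_n,e\rangle|^2$ when $n\notin F$ and $0$ when $n\in F$, so $\sum_n\|\varphi_n-\psi_n\|^2<\epsilon$ as required. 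Taking $S=\mathbb{N}\setminus F$: each $\psi_n$ with $n\in S$ lies in $e^{\perp}$, so $\overline{\mathrm{span}}\{\psi_n\}_{n\in S}\subseteq e^{\perp}\subsetneq\mathcal{H}$; while $\{\psi_n\}_{n\in S^c}=\{\varphi_n\}_{n\in F}$ is a finite family and therefore spans a finite-dimensional --- in particular proper --- subspace of the infinite-dimensional space $\mathcal{H}$. Thus $\{\psi_n\}$ does not have the complement property.

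It then remains to check that $\{\psi_n\}$ is genuinely a frame, which is the standard frame-perturbation fact: by the triangle inequality in $\ell^2$,
\[
\Bigl(\sum_n|\langle f,\psi_n\rangle|^2\Bigr)^{1/2}\ \ge\ \Bigl(\sum_n|\langle f,\varphi_n\rangle|^2\Bigr)^{1/2}-\Bigl(\sum_n|\langle f,\varphi_n-\psi_n\rangle|^2\Bigr)^{1/2}\ \ge\ \bigl(\sqrt{A}-\sqrt{\epsilon}\bigr)\|f\|,
\]
which is a strictly positive multiple of $\|f\|$ since $\epsilon<A$, and the upper frame bound follows in the same manner. Now Theorem \ref{CP}(a), applied to $\{\psi_n\}$, shows that $\{\psi_n\}$ does not do phase retrieval; this completes the argument, and since Theorem \ref{CP}(a) is valid over both $\mathbb{R}$ and $\mathbb{C}$, so is the conclusion.

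On where the difficulty sits: the tension one has to overcome is that asking for \emph{both} halves of a partition to fail to span, while keeping the perturbation small in $\ell^2$, seems incompatible with the upper frame bound, which only tells us $\sum_n|\langle\varphi_n,e\rangle|^2$ is of order $B$, not small. The single nontrivial idea is to route almost all of this mass --- everything outside a finite index set $F$ --- into the perturbation, and to leave the finitely many vectors indexed by $F$ untouched: a finite family can never span an infinite-dimensional space, so that half of the complement property is violated for free. Once that is noticed there is essentially no further obstacle, the frame property being preserved purely because the perturbation is $\ell^2$-small.
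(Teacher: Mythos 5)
Your proof is correct and takes essentially the same route as the paper's: leave a finite index set (carrying all but $\epsilon$ of the mass of $(\langle\varphi_n,e\rangle)_n$) untouched and project the remaining vectors onto $e^{\perp}$, so that one half of the partition spans only a finite-dimensional subspace and the other avoids $e$, killing the complement property. The only difference is that you spell out the standard frame-perturbation estimate (and the reduction to $\epsilon<A$) that the paper merely asserts with ``for $\epsilon$ sufficiently small $\{\psi_n\}$ is still a frame.''
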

\begin{proof}
Let $\{e_n\}_{n\in\mathbb{N}}$ be an orthonormal basis for $\mathcal{H}$ and choose $k\in\mathbb{N}$ so that
$$
\sum_{n=k+1}^{\infty}|\langle e_1,\varphi_n\rangle|^2<\epsilon.
$$
For $n\leq k$ let $\psi_n=\varphi_n$ and for $n>k$ let
$$
\psi_n=\sum_{i=2}^{\infty}\langle\varphi_n,e_i\rangle e_i.
$$

Now we have that
$$
\sum_{n\in\mathbb{N}}\|\varphi_n-\psi_n\|^2=\sum_{n=k+1}^{\infty}|\langle e_1,\varphi_n\rangle|^2<\epsilon.
$$
Also, it is clear that $\{\psi_n\}_{n=1}^k$ cannot span $\mathcal{H}$, and for every $n>k$ we have that $\langle e_1,\psi_n\rangle=0$, so $\{\psi_n\}_{n=k+1}^{\infty}$ does not span $\mathcal{H}$ either.  Therefore $\{\psi_n\}_{n\in\mathbb{N}}$ does not have the complement property and so by Theorem \ref{CP} does not do phase retrieval.  Furthermore, for $\epsilon$ sufficiently small $\{\psi_n\}_{n\in\mathbb{N}}$ is still a frame.
\end{proof}

The above proposition suggests an infinite-dimensional space is fundamentally different from a finite-dimensional setting
when doing phase retrieval.  In particular, since any frame can be perturbed by an arbitrarily small amount to arrive at a frame that does not do phase retrieval, it suggests that phase retrieval for 
infinite-dimensional spaces is inherently unstable.  We now state the main result,
confirming this intuition.

\begin{theorem}\label{main}
Let $\mathcal{H}$ be an infinite-dimensional separable Hilbert space and let $\Phi=\{\varphi_n\}_{n\in\mathbb{N}}$ be a frame for $\mathcal{H}$ with frame bounds $0<A\leq B<\infty$; further suppose that $\|\varphi_n\|\geq c>0$ for every $n\in\mathbb{N}$. Then, for every $\delta>0$, there exist $f,g\in\mathcal{H}$ so that $\inf_{|\alpha|=1}\|f-\alpha g\|\geq 1$ but $\|\mathcal{A}_{\Phi}(f)-\mathcal{A}_{\Phi}(g)\|<\delta$.
\end{theorem}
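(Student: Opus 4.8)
The plan is to construct the pair $(f,g)$ directly, in the form $f=f_1+f_2$ and $g=f_1-f_2$ with $f_1,f_2$ orthonormal, chosen so that the intensity measurements decouple: for all but finitely many $n$ the coefficient $\langle f_1,\varphi_n\rangle$ will be negligible, and for the finitely many exceptional indices the vector $f_2$ will be \emph{exactly} orthogonal to $\varphi_n$. Orthonormality of $f_1,f_2$ keeps $f$ and $g$ a fixed distance apart in the quotient metric, while the decoupling forces $|\langle f,\varphi_n\rangle|$ and $|\langle g,\varphi_n\rangle|$ to agree closely for every $n$.

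To carry this out I would first fix an arbitrary unit vector $f_1\in\mathcal{H}$. Since $\Phi$ is a Bessel sequence, $\sum_n|\langle f_1,\varphi_n\rangle|^2\le B<\infty$, so I can pick a finite set $F\subseteq\mathbb{N}$ with $\sum_{n\notin F}|\langle f_1,\varphi_n\rangle|^2<\delta^2/8$. The subspace $V=\mathrm{span}\{\varphi_n:n\in F\}$ is finite-dimensional, so $V^{\perp}\cap\{f_1\}^{\perp}$ has finite codimension in the infinite-dimensional space $\mathcal{H}$ and is therefore nonzero; I then choose a unit vector $f_2$ inside it, which gives $\langle f_1,f_2\rangle=0$ and $\langle f_2,\varphi_n\rangle=0$ for every $n\in F$.

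It then remains to check the two inequalities. For the separation, since $f_1\perp f_2$ are unit vectors we have, for every unimodular $\alpha$, $\|f-\alpha g\|^2=\|(1-\alpha)f_1+(1+\alpha)f_2\|^2=|1-\alpha|^2+|1+\alpha|^2=4$, hence $\inf_{|\alpha|=1}\|f-\alpha g\|=2\ge 1$. For the measurement bound, observe that when $n\in F$ we have $\langle f,\varphi_n\rangle=\langle g,\varphi_n\rangle=\langle f_1,\varphi_n\rangle$, so the $n$-th term is zero, and when $n\notin F$ the elementary inequality $\bigl||a|-|b|\bigr|\le|a+b|$ with $a=\langle f,\varphi_n\rangle$, $b=\langle g,\varphi_n\rangle$ (using $f+g=2f_1$) gives $\bigl||\langle f,\varphi_n\rangle|-|\langle g,\varphi_n\rangle|\bigr|\le 2|\langle f_1,\varphi_n\rangle|$; summing over $n\notin F$ yields $\|\mathcal{A}_{\Phi}(f)-\mathcal{A}_{\Phi}(g)\|^2\le 4\sum_{n\notin F}|\langle f_1,\varphi_n\rangle|^2<\delta^2/2<\delta^2$.

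The single place where infinite-dimensionality is essential is the assertion that $V^{\perp}\cap\{f_1\}^{\perp}$ is nonzero; in finite dimensions that intersection may be trivial, which is exactly why Proposition \ref{finite} holds there. I do not expect a substantive obstacle in filling this in — the only care needed is in the bookkeeping of the constants and, in the complex case, the one-line computation of $\inf_{|\alpha|=1}\|f-\alpha g\|$. I note that this argument uses only the Bessel bound $B$; neither the lower frame bound $A$ nor the lower bound $c$ on $\|\varphi_n\|$ enters, so those hypotheses are presumably kept in view of the quantitative stability results that follow.
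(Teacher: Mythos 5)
Your proof is correct. The final construction is the same shape as the paper's --- $f=u+\psi$, $g=u-\psi$ with $u\perp\psi$ orthonormal and $\psi$ orthogonal to a finite block of frame vectors carrying almost all of the energy of $u$ --- but you arrive at $u$ differently, and your route is genuinely simpler. The paper takes $u=\varphi_k/\|\varphi_k\|$ for a carefully chosen frame element $k$, which requires a separate Lemma (that one can find $k$ and a finite window $\{N+1,\dots,m\}$ outside of which $\sum_n|\langle\varphi_k,\varphi_n\rangle|^2<\epsilon$) and uses the hypothesis $\|\varphi_n\|\geq c$ to normalize. You instead take $u=f_1$ to be an \emph{arbitrary} unit vector and use only the Bessel bound to truncate its frame coefficients to a finite set $F$; the existence of a unit $f_2\in V^{\perp}\cap\{f_1\}^{\perp}$ is then immediate from infinite-dimensionality. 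This eliminates the Lemma and shows, as you correctly observe, that the hypotheses $\|\varphi_n\|\geq c$ and the lower frame bound play no role in the instability phenomenon itself (the paper keeps its Lemma partly because it yields the separate corollary that no infinite frame has the $\sigma$-strong complement property). One small remark: your inequality $\bigl||a|-|b|\bigr|\leq|a+b|$ with $a=\langle f,\varphi_n\rangle$, $b=\langle g,\varphi_n\rangle$ is exactly the paper's estimate $\bigl||z_1+z_2|-|z_1-z_2|\bigr|\leq 2|z_1|$ in different notation, so the quantitative bookkeeping matches as well.
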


Before proving the theorem we need a lemma.

\begin{lemma}
Let $\mathcal{H}$ be an infinite-dimensional separable Hilbert space and let $\Phi=\{\varphi_n\}_{n\in\mathbb{N}}$ be a frame for $\mathcal{H}$ with frame bounds $0<A\leq B<\infty$.  For every $\epsilon>0$ and every $N\in\mathbb{N}$ there is a $k>N$ and a $m>k$ so that
$$
\sum_{n\not\in\{N+1,...,m\}}|\langle \varphi_k,\varphi_n\rangle|^2<\epsilon.
$$
\end{lemma}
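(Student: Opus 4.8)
The plan is to split the complement $\mathbb{N}\setminus\{N+1,\dots,m\}$ into a head $\{1,\dots,N\}$ and a tail $\{m+1,m+2,\dots\}$ and to arrange that each piece contributes less than $\epsilon/2$. The index $k$ that we produce will satisfy $N<k<m$, hence will lie inside the window $\{N+1,\dots,m\}$; in particular the potentially large diagonal term $|\langle\varphi_k,\varphi_k\rangle|^2=\|\varphi_k\|^4$ never enters the sum we must bound, and only the upper (Bessel) frame bound $B$ will be used.

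For the head, I would set $V=\mathrm{span}\{\varphi_1,\dots,\varphi_N\}$, a subspace of dimension $d\le N$, and let $P$ be the orthogonal projection of $\mathcal{H}$ onto $V$. Since $\varphi_n\in V$ for $n\le N$, we have $\langle\varphi_k,\varphi_n\rangle=\langle P\varphi_k,\varphi_n\rangle$ for such $n$, so the Bessel bound gives
$$
\sum_{n=1}^N|\langle\varphi_k,\varphi_n\rangle|^2\le\sum_{n\in\mathbb{N}}|\langle P\varphi_k,\varphi_n\rangle|^2\le B\,\|P\varphi_k\|^2 ,
$$
and it is enough to find $k>N$ with $\|P\varphi_k\|^2<\epsilon/(2B)$. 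Such a $k$ must exist: picking an orthonormal basis $u_1,\dots,u_d$ of $V$ and applying the Bessel bound to each $u_j$,
$$
\sum_{k\in\mathbb{N}}\|P\varphi_k\|^2=\sum_{j=1}^d\ \sum_{k\in\mathbb{N}}|\langle\varphi_k,u_j\rangle|^2\le\sum_{j=1}^d B\|u_j\|^2=Bd<\infty ,
$$
so $\|P\varphi_k\|^2\to 0$ as $k\to\infty$, and any sufficiently large $k>N$ works; fix such a $k$.

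With $k$ fixed, the tail is routine: the Bessel bound gives $\sum_{n\in\mathbb{N}}|\langle\varphi_k,\varphi_n\rangle|^2\le B\|\varphi_k\|^2<\infty$, so this series converges and one can choose $m>k$ with $\sum_{n>m}|\langle\varphi_k,\varphi_n\rangle|^2<\epsilon/2$; adding the head and tail estimates yields the claim. The only step that requires an idea is the head bound — the finite rank of $P$ together with the Bessel property forces $\sum_k\|P\varphi_k\|^2<\infty$, which says that all but finitely many of the frame vectors are nearly orthogonal to the fixed finite block $\varphi_1,\dots,\varphi_N$ — and I expect this to be the main (though mild) obstacle; note that the lower frame bound $A$ is not used anywhere.
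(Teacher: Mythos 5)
Your proof is correct and follows essentially the same route as the paper's: project onto $V=\mathrm{span}\{\varphi_1,\dots,\varphi_N\}$, use the Bessel bound to get $\sum_k\|P\varphi_k\|^2\le B\dim V<\infty$ and hence a $k>N$ with $\|P\varphi_k\|^2<\epsilon/(2B)$, then pick $m$ from the convergence of $\sum_n|\langle\varphi_k,\varphi_n\rangle|^2\le B\|\varphi_k\|^2$. No gaps; this matches the paper's argument step for step.
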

\begin{proof}
Fix $\epsilon>0$ and $N\in\mathbb{N}$. Let $V=\mathrm{span}\{\varphi_1,...,\varphi_N\}$ and let $P_V$ denote the orthogonal projection onto $V$.  Let $\{e_{\ell}\}_{\ell=1}^L$ be an orthonormal basis for $V$ and note that
\begin{eqnarray*}
\sum_{n\in N}\|P_V\varphi_n\|^2&=&\sum_{\ell=1}^L\sum_{n\in\mathbb{N}}|\langle\varphi_n,e_{\ell}\rangle|^2 \\
&\leq&\sum_{\ell=1}^LB\|e_{\ell}\|^2=BL.
\end{eqnarray*}
So since $\sum_{n\in\mathbb{N}}\|P_V\varphi_n\|^2<\infty$ we can find $k > N$ so that $\|P_V\varphi_k\|^2<\frac{\epsilon}{2B}$. Then
\begin{eqnarray*}
\sum_{n=1}^N|\langle \varphi_k,\varphi_n\rangle|^2&=&\sum_{n=1}^N|\langle\varphi_k,P_V\varphi_n\rangle|^2 \\
&=&\sum_{n=1}^N|\langle P_V\varphi_k,\varphi_n\rangle|^2 \\
&\leq&\sum_{n\in\mathbb{N}}|\langle P_V\varphi_k,\varphi_n\rangle|^2 \\
&\leq&B\|P_V\varphi_k\|^2<B\frac{\epsilon}{2B}=\frac{\epsilon}{2}.
\end{eqnarray*}

Now observe that
\begin{eqnarray*}
\sum_{n=k+1}^{\infty}|\langle \varphi_k,\varphi_n\rangle|^2&\leq&\sum_{n\in\mathbb{N}}|\langle \varphi_k,\varphi_n\rangle|^2 \\
&\leq& B\|\varphi_k\|^2<\infty,
\end{eqnarray*}
so there is an $m>k$ so that
$$
\sum_{n=m+1}^{\infty}|\langle \varphi_k,\varphi_n\rangle|^2<\frac{\epsilon}{2}.
$$
Therefore
\begin{eqnarray*}
\sum_{n\not\in\{N+1,...,m\}}|\langle \varphi_k,\varphi_n\rangle|^2&=&\sum_{n=1}^N|\langle \varphi_k,\varphi_n\rangle|^2+\sum_{n=m+1}^{\infty}|\langle \varphi_k,\varphi_n\rangle|^2 \\
&<&\frac{\epsilon}{2}+\frac{\epsilon}{2}=\epsilon.
\end{eqnarray*}
\end{proof}

Note that one consequence of the above lemma is that no frame for an infinite-dimensional Hilbert space can have the $\sigma$-strong complement property, regardless of how small one picks $\sigma>0$.

\begin{proof}[Proof of {\rm Theorem \ref{main}}]
We use the Lemma to construct $f$ and $g$ explicitly. 

Pick $\epsilon = c^2\,\delta^2/4 $ and $N \in \N$, and determine $k$ and $m$ as in the Lemma, for these choices of 
$\epsilon,N$. Next, pick $\psi$ so that it is orthogonal to the finite-dimensional span of 
$\varphi_{N+1},\, \ldots,\varphi_m$, and set $f\,=\,\varphi_k\,\|\varphi_k\|^{-1} \,+\, \psi\,$,
and $g\,=\,\varphi_k\,\|\varphi_k\|^{-1} \,-\, \psi\,$. 

For $n \in \{N+1,\ldots,m\}$ we have $\langle f, \varphi_n \rangle = \langle g, \varphi_n \rangle$, 
so that 
\[
\sum_{n \in \Z} \left(|\langle f, \varphi_n \rangle | - |\langle g, \varphi_n \rangle |\right)^2
= \sum_{n \not\in \{N+1,\ldots,m \}} \left(|\langle f, \varphi_n \rangle | - |\langle g, \varphi_n \rangle |\right)^2 \,.
\]
The triangle inequality implies that 
$\left|\,|z_1+z_2|-|z_1-z_2|\,\right| \leq 2 |z_1|$ for all $z_1, z_2 \in \C$; applying this
to each term in the right hand side of the inequality, setting 
$z_1= \langle \varphi_k, \varphi_n \rangle \|\varphi_k\|^{-1}$ and $z_2 = \langle \psi, \varphi_n \rangle$,
leads to
\[
\sum_{n \in \Z} \left(|\langle f, \varphi_n \rangle | - |\langle g, \varphi_n \rangle |\right)^2
\leq 4 \sum_{n \not\in \{N+1,\ldots,m \}} \left|\langle \varphi_k, \varphi_n \rangle \right|^2 \, \|\varphi_k\|^{-2}
\leq \frac{4 \epsilon}{c^2} = \delta^2\,,
\]
or $\|\mathcal{A}_{\Phi}(f)\,-\,\mathcal{A}_{\Phi}(g)\|\ \leq \delta $.

On the other hand, because $\psi$ and $\|\varphi_k\|^{-1}\varphi_k$ are orthogonal unit vectors, we have
that, for all $\alpha \in C$ with $|\alpha|=1$,
\[
\|f-\alpha g\|^2 = \|\,(1 -\alpha) \|\varphi_k\|^{-1} \varphi_k \,+\,(1+\alpha) \psi \,\|^2 = |1-\alpha|^2+|1+\alpha|^2= 4\,,
\]
so that $\inf_{|\alpha|=1} \|f-\alpha g \|=2$. 
\end{proof}

\begin{remark}
Although it is not important here, it may be interesting to note that, 
regardless of how small $\delta$ is, the functions
$f$, $g$ constructed in the proof lie within the closed bounded ball 
with radius $\sqrt{2}$ (in fact $\|f\|=\|g\|=\sqrt{2}$). 
\end{remark}

Since Theorem \ref{main} says that we can never do phase retrieval in a robust way for an infinite-dimensional space, but Theorem \ref{finite} says we can basically always do it for a finite-dimensional space, it seems natural to try to use finite-dimensional approximations when working in an infinite-dimensional setting. Our next theorem 
is a first step in that direction; again, we first establish a lemma.

\begin{lemma}\label{UL}
Let $\mathcal{H}$ be a separable Hilbert space and let $\Phi$ be a frame for $\mathcal{H}$ with frame bounds $0<A\leq B<\infty$, then for every $f,g\in\mathcal{H}$ we have
$$
\|\mathcal{A}_{\Phi}(f)-\mathcal{A}_{\Phi}(g)\|\leq B^{1/2}\inf_{|\alpha|=1}\|f-\alpha g\|.
$$
\end{lemma}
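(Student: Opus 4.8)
The plan is to reduce everything to the upper frame bound via a pointwise reverse triangle inequality, after first exploiting the phase invariance of $\mathcal{A}_\Phi$. First I would note that for any unimodular scalar $\alpha$ and any $n$ we have $|\langle g, \varphi_n\rangle| = |\langle \alpha g, \varphi_n\rangle|$, so $\mathcal{A}_\Phi(g) = \mathcal{A}_\Phi(\alpha g)$; hence it suffices to bound $\|\mathcal{A}_\Phi(f) - \mathcal{A}_\Phi(\alpha g)\|$ by $B^{1/2}\|f - \alpha g\|$ for each fixed $\alpha$ with $|\alpha| = 1$, and then take the infimum over such $\alpha$ on the right-hand side.

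Next I would apply the reverse triangle inequality term by term: for each $n$,
\[
\bigl|\,|\langle f,\varphi_n\rangle| - |\langle \alpha g,\varphi_n\rangle|\,\bigr| \leq |\langle f,\varphi_n\rangle - \langle \alpha g,\varphi_n\rangle| = |\langle f - \alpha g, \varphi_n\rangle|.
\]
Squaring and summing over $n$ gives
\[
\|\mathcal{A}_\Phi(f) - \mathcal{A}_\Phi(g)\|^2 = \sum_{n} \bigl(|\langle f,\varphi_n\rangle| - |\langle g,\varphi_n\rangle|\bigr)^2 \leq \sum_{n} |\langle f - \alpha g, \varphi_n\rangle|^2.
\]
Finally, the upper frame inequality bounds the last sum by $B\|f - \alpha g\|^2$, so $\|\mathcal{A}_\Phi(f) - \mathcal{A}_\Phi(g)\| \leq B^{1/2}\|f - \alpha g\|$; taking the infimum over unimodular $\alpha$ yields the claimed estimate.

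There is essentially no obstacle here — the argument is just the reverse triangle inequality plus the Bessel (upper frame) bound. The only point requiring a small amount of care is the very first step, namely realizing that $\mathcal{A}_\Phi$ is constant on each equivalence class $\tilde{g}$, which is what lets us compare $\mathcal{A}_\Phi(f)$ against $\mathcal{A}_\Phi(\alpha g)$ for the optimal $\alpha$ rather than only against $\mathcal{A}_\Phi(g)$ itself.
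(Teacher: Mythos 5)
Your argument is correct and is essentially the same as the paper's: both use the pointwise reverse triangle inequality, the upper frame bound $B$, and the phase invariance $\mathcal{A}_{\Phi}(\alpha g)=\mathcal{A}_{\Phi}(g)$ to pass to the infimum over unimodular $\alpha$. The only difference is cosmetic ordering — you fix $\alpha$ first and take the infimum at the end, while the paper proves the bound for $\alpha=1$ and then invokes phase invariance — so there is nothing further to add.
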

\begin{proof}
First note that
$$
||\langle f,\varphi_n\rangle|-|\langle g,\varphi_n\rangle||\leq|\langle f,\varphi_n\rangle-\langle g,\varphi_n\rangle|
$$
by the reverse triangle inequality.  This means that
$$
\|\mathcal{A}_{\Phi}(f)-\mathcal{A}_{\Phi}(g)\|\leq\|T_{\Phi}f-T_{\Phi}g\|\leq B^{1/2}\|f-g\|.
$$
Since $\mathcal{A}_{\Phi}(\alpha g)=\mathcal{A}_{\phi}(g)$ for any unimodular scalar $\alpha$, we have
$$
\|\mathcal{A}_{\Phi}(f)-\mathcal{A}_{\Phi}(g)\|=\inf_{|\alpha|=1}\|\mathcal{A}_{\Phi}(f)-\mathcal{A}_{\Phi}(\alpha g)\|\leq B^{1/2}\inf_{|\alpha|=1}\|f-\alpha g\|.
$$
\end{proof}

\begin{remark}
Since Theorem \ref{main} says that $\mathcal{A}_{\Phi}$ can never have a positive
lower Lipschitz bound when $\Phi$ is a frame it may seem tempting to ask
whether we can achieve a positive lower bound for a set that does not form
a frame, i.e., a sequence $\Phi$ that does not have an upper frame bound.
While this might be possible, Lemma \ref{UL} tells us that in this case
$\mathcal{A}_{\Phi}$ cannot have a finite upper Lipschitz bound. To see this take $g=0$ in the proof of the Lemma so that $\|\mathcal{A}_{\Phi}(f)-\mathcal{A}_{\Phi}(g)\|=\|\mathcal{A}_{\Phi}(f)\|=\|T_{\Phi}(f)\|$ for every $f\in\mathcal{H}$, then use the fact that $\Phi$ does not have a finite upper frame bound to produce a sequence of unit vectors $\{f_n\}_{n\in\mathbb{N}}$ with $\|T_{\Phi}(f_n)\|\rightarrow\infty$.
\end{remark}

\begin{theorem}\label{approx}
Let $\mathcal{H}$ be an infinite-dimensional separable Hilbert space and let $\Phi=\{\varphi_n\}_{n\in\mathbb{N}}$ be a frame for $\mathcal{H}$ with frame bounds $0<A\leq B<\infty$. For each $m\in\mathbb{N}$ let $V_m$ be a finite-dimensional subspace of $\mathcal{H}$ so that $\dim(V_{m+1})>\dim(V_m)$. Suppose there is an increasing function $G(m)$, with $\lim_{m\rightarrow \infty}G(m)=\infty$, so that the following holds for every $m$: for every $f,g\in V_m$
$$
\inf_{|\alpha|=1}\|f-\alpha g\|\leq G(m)\|\mathcal{A}_{\Phi}(f)-\mathcal{A}_{\Phi}(g)\|.
$$
For $\gamma>1$ and $R>0$ define
$$
\mathcal{B}_{\gamma}(R)=\{f\in\mathcal{H}:\|f-P_mf\|\leq G(m+1)^{-\gamma}R\|f\|\text{ for every }m\in\mathbb{N}\}
$$
where $P_m$ denotes the orthogonal projection onto $V_m$. Then for every $f,g\in\mathcal{B}_{\gamma}(R)$ we have
\begin{equation}\label{approx-ineq}
\inf_{|\alpha|=1}\|f-\alpha g\|\leq C \, \left(\|f\|+\|g\|\right)^{1/\gamma}\,\|\mathcal{A}_{\Phi}(f)-\mathcal{A}_{\Phi}(g)\|^{\frac{\gamma-1}{\gamma}}
\end{equation}
where $C$ depends on only $B,R,\gamma$ and $G(1)$.
\end{theorem}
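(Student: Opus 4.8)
The plan is to reduce the claimed ``global'' estimate to the finite-dimensional stability hypothesis on a single, well-chosen space $V_m$, paying an error term built from $\|f-P_mf\|$ and $\|g-P_mg\|$, and then to optimize over $m$. Throughout put $\eta=\|\mathcal{A}_{\Phi}(f)-\mathcal{A}_{\Phi}(g)\|$, $S=\|f\|+\|g\|$, and write $f_m=P_mf,\ g_m=P_mg\in V_m$.

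The first step is a \emph{single-scale estimate} valid for every $m\in\mathbb{N}$. The triangle inequality for the quotient metric gives $\inf_{|\alpha|=1}\|f-\alpha g\|\le\|f-f_m\|+\|g-g_m\|+\inf_{|\alpha|=1}\|f_m-\alpha g_m\|$; the hypothesis applied on $V_m$ bounds the last term by $G(m)\|\mathcal{A}_{\Phi}(f_m)-\mathcal{A}_{\Phi}(g_m)\|$; and Lemma \ref{UL}, applied to the pairs $(f_m,f)$ and $(g_m,g)$, gives $\|\mathcal{A}_{\Phi}(f_m)-\mathcal{A}_{\Phi}(g_m)\|\le\eta+B^{1/2}(\|f-f_m\|+\|g-g_m\|)$. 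Inserting the defining inequality of $\mathcal{B}_{\gamma}(R)$, namely $\|f-f_m\|+\|g-g_m\|\le G(m+1)^{-\gamma}RS$, and using that $G$ is increasing to write $1+B^{1/2}G(m)\le(G(1)^{-1}+B^{1/2})\,G(m+1)=:C_1G(m+1)$, one arrives at
$$
\inf_{|\alpha|=1}\|f-\alpha g\|\ \le\ G(m)\,\eta\ +\ C_1RS\,G(m+1)^{1-\gamma}\qquad\text{for every }m\in\mathbb{N}.
$$

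The second step is to choose $m$. If $\eta=0$ the left-hand side above tends to $0$ as $m\to\infty$ (since $\gamma>1$ and $G(m+1)\to\infty$), so \eqref{approx-ineq} holds; assume $\eta>0$. Treating $t=G(m+1)$ as a continuous variable, the right-hand side is of order $(C_1RS)^{1/\gamma}\eta^{(\gamma-1)/\gamma}$ once $t$ is of the order of the balance point $w:=(C_1RS/\eta)^{1/\gamma}$. Since $G$ is increasing and unbounded I would take $m$ to be the least index with $G(m+1)\ge w$. When $m\ge2$, minimality forces $G(m)<w$, so the first term is $<w\eta=(C_1R)^{1/\gamma}S^{1/\gamma}\eta^{(\gamma-1)/\gamma}$ and the second is $\le C_1RS\,w^{1-\gamma}=(C_1R)^{1/\gamma}S^{1/\gamma}\eta^{(\gamma-1)/\gamma}$, giving \eqref{approx-ineq} with $C=2(C_1R)^{1/\gamma}$. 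When $m=1$ (equivalently $w\le G(2)$) the second term is bounded exactly as before, while for the first term $G(1)\eta$ I would use the a priori bound $\eta\le B^{1/2}S$ (immediate from the upper frame bound, or from Lemma \ref{UL}) to get $G(1)\eta\le G(1)B^{1/(2\gamma)}S^{1/\gamma}\eta^{(\gamma-1)/\gamma}$. Taking $C=\max\{2(C_1R)^{1/\gamma},\,(C_1R)^{1/\gamma}+G(1)B^{1/(2\gamma)}\}$ then proves \eqref{approx-ineq}, and this $C$ depends only on $B,R,\gamma$ and $G(1)$.

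The routine parts are the two triangle inequalities and the elementary one-variable estimate. The point that needs care is that $\{G(m)\}_m$ is a discrete set, so $G(m+1)$ cannot be placed exactly at the continuous minimizer $w$; taking the least $m$ with $G(m+1)\ge w$ cures this for $m\ge2$, since then automatically $G(m)<w$, but it leaves the boundary case $m=1$, where nothing forces $G(1)<w$. The main obstacle is thus to dispose of this boundary case without $C$ coming to depend on $G(2)$; the resolution is the observation that $\eta$ can never exceed $B^{1/2}S$, so that in the regime $w\le G(2)$ the term $G(1)\eta$ is already of the required order $S^{1/\gamma}\eta^{(\gamma-1)/\gamma}$, with a constant involving only $G(1)$ and $B$.
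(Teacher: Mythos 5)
Your proof is correct, and its core is the same as the paper's: project onto a single well-chosen $V_m$, control $\inf_{|\alpha|=1}\|P_mf-\alpha P_mg\|$ via the hypothesis on $V_m$ together with Lemma \ref{UL}, and absorb the tails using membership in $\mathcal{B}_{\gamma}(R)$. Where you genuinely diverge is in the endgame. The paper selects $m$ by sandwiching $\eta=\|\mathcal{A}_{\Phi}(f)-\mathcal{A}_{\Phi}(g)\|$ between $RB^{1/2}G(m+1)^{-\gamma}$ and $RB^{1/2}G(m)^{-\gamma}$ (a choice depending only on $\eta$), which yields the non-homogeneous intermediate bound \eqref{intermediate} with factor $1+\|f\|+\|g\|$; the homogeneous form \eqref{approx-ineq} is then recovered by the scale-invariance of $\mathcal{B}_{\gamma}(R)$ and an amplification (optimize over $M$ in $f\mapsto Mf$). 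You instead fold the norms into the scale selection from the start, choosing the least $m$ with $G(m+1)\geq (C_1R(\|f\|+\|g\|)/\eta)^{1/\gamma}$, which balances the two terms of your single-scale estimate directly and makes the amplification step unnecessary. The price is the boundary case $m=1$, where minimality no longer gives $G(1)<w$; you correctly dispose of it with the a priori bound $\eta\leq B^{1/2}(\|f\|+\|g\|)$ from Lemma \ref{UL}, which plays the same role as the paper's initial case $\eta\geq RB^{1/2}G(1)^{-\gamma}$. Both routes produce a constant depending only on $B,R,\gamma,G(1)$; yours is arguably more direct, while the paper's separates cleanly into a crude non-homogeneous estimate plus a reusable scaling trick.
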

\begin{proof}
Let $f,g\in\mathcal{B}_{\gamma}(R)$. We start by proving an equality of the type 
\begin{equation}
\label{intermediate}
\inf_{|\alpha|=1}\|f-\alpha g\|\leq C' \left(1+\|f\|+\|g\|\right)\|\mathcal{A}_{\Phi}(f)-\mathcal{A}_{\Phi}(g)\|^{\frac{\gamma-1}{\gamma}}\,;
\end{equation}
equation (\ref{approx-ineq}) will then follow by an amplification trick. 

If $\|\mathcal{A}_{\Phi}(f)-\mathcal{A}_{\Phi}(g)\|\geq R\,B^{1/2}\,G(1)^{-\gamma}$, 
then it follows from
$$
\|f-\alpha g\|\leq \|f\| +\|g\| 
$$ 
that (\ref{intermediate}) is satisfied for 
$C'= C_1:=\left(\,R^{-1} B^{-1/2} G(1)^{\gamma}\, \right)^{\frac{\gamma-1}{\gamma}}$. 

Now 
assume that $\|\mathcal{A}_{\Phi}(f)-\mathcal{A}_{\Phi}(g)\|<R\,B^{1/2}\,G(1)^{-\gamma}$.  
Find then $m$ so that
\begin{equation}
\label{bb}
R\,B^{1/2}\,G(m+1)^{-\gamma}\leq\|\mathcal{A}_{\Phi}(f)
-\mathcal{A}_{\Phi}(g)\|\leq R\,B^{1/2}\,G(m)^{-\gamma}
\end{equation}
(we can always do this since $G$ is increasing and 
$\lim_{m\rightarrow \infty} G(m)= \infty$).  We have that
$$
\|f-\alpha g\|\leq \|f-P_mf\|+\|g-P_mg\|+\|P_mf-\alpha P_mg\|,
$$
and since $P_mf,P_mg\in V_m$ we also have that
\begin{eqnarray*}
\inf_{|\alpha|=1}\|P_mf-\alpha P_mg\|
&\leq& G(m)\|\mathcal{A}_{\Phi}(P_mf)-\mathcal{A}_{\Phi}(P_mg)\| \\
&\leq& G(m)\left(\|\mathcal{A}_{\Phi}(f)-\mathcal{A}_{\Phi}(P_mf)\|
+\|\mathcal{A}_{\Phi}(f)-\mathcal{A}_{\Phi}(g)\|
+\|\mathcal{A}_{\Phi}(g)-\mathcal{A}_{\Phi}(P_mg)\|\right)\\
&\leq& G(m)\left(B^{1/2}\|f-P_mf\|+\|\mathcal{A}_{\Phi}(f)
-\mathcal{A}_{\Phi}(g)\|+B^{1/2}\|g-P_mg\|\right)
\end{eqnarray*}
so that
\begin{equation}
\label{aa}
\inf_{|\alpha|=1}\|f-\alpha g\|
\leq\left(1+G(m)B^{1/2}\right)\,\left(\,\|f-P_mf\|+\|g-P_mg\|\,\right)
\,+\,
G(m)\,\|\mathcal{A}_{\Phi}(f)-\mathcal{A}_{\Phi}(g)\|\,.
\end{equation}
Because $f$ and $g$ are both in 
$\mathcal{B}_{\gamma}(R)$, we have
\[
\|f-P_mf\|+\|g-P_mg\|\leq G(m+1)^{-\gamma} R (\,\|f\|+\|g\|\,) 
\leq \,B^{-1/2}\,\|\mathcal{A}_{\Phi}(f)-\mathcal{A}_{\Phi}(g)\|\,(\,\|f\|+\|g\|\,)\,;
\]
on the other hand, using 
$\|\mathcal{A}_{\Phi}(f)-\mathcal{A}_{\Phi}(g)\|<R\,B^{1/2}\,G(1)^{-\gamma}$ and (\ref{bb}), we derive
\begin{eqnarray*}
1+G(m)B^{1/2} &\leq& R^{\frac{1}{\gamma}}\, B^{\frac{1}{2\gamma}} \,G(1)^{-1}\,
 \|\mathcal{A}_{\Phi}(f)-\mathcal{A}_{\Phi}(g)\|^{-\frac{1}{\gamma}}\,
+ \,B^{1/2}\,R^{\frac{1}{\gamma}}\, B^{\frac{1}{2\gamma}} \,
 \|\mathcal{A}_{\Phi}(f)-\mathcal{A}_{\Phi}(g)\|^{-\frac{1}{\gamma}}\,\\
&\leq& R^{\frac{1}{\gamma}}\,B^{\frac{1}{2\gamma}} \,\left(G(1)^{-1}+B^{1/2}\right)
\|\mathcal{A}_{\Phi}(f)-\mathcal{A}_{\Phi}(g)\|^{-\frac{1}{\gamma}}\,.
\end{eqnarray*}
Finally, we also have, using (\ref{bb}) again,
\begin{eqnarray*}
G(m)\,\|\mathcal{A}_{\Phi}(f)-\mathcal{A}_{\Phi}(g)\|
&\leq& \left( \|\mathcal{A}_{\Phi}(f)
-\mathcal{A}_{\Phi}(g)\|^{-1} \,R\,B^{1/2}\, \right)^{\frac{1}{\gamma}} 
\,\|\mathcal{A}_{\Phi}(f)-\mathcal{A}_{\Phi}(g)\|\\
&=&  R^{\frac{1}{\gamma}} \,B^{\frac{1}{2\gamma}} 
\,\|\mathcal{A}_{\Phi}(f)-\mathcal{A}_{\Phi}(g)\|^{\frac{\gamma-1}{\gamma}}
\end{eqnarray*}
Substituting all this into (\ref{aa}), we obtain
\[
\inf_{|\alpha|=1}\|f-\alpha g\|
\leq 
R^{\frac{1}{\gamma}}\,B^{\frac{1}{2\gamma}} 
\left[\,\left(B^{-1/2}G(1)^{-1}+1\right)\,(\,\|f\|+\|g\|\,)\,
+ \,1\,\right]\, 
\|\mathcal{A}_{\Phi}(f)-\mathcal{A}_{\Phi}(g)\|^{\frac{\gamma-1}{\gamma}}~,
\]
which does indeed imply the inequality (\ref{intermediate}), with 
$C'=C_2:= R^{\frac{1}{\gamma}}\,B^{\frac{1}{2\gamma}} \left(B^{-1/2}G(1)^{-1}+1\right)$.

It thus follows that, for all $f,g\in\mathcal{B}_{\gamma}(R)$, (\ref{intermediate}) holds
for $C'=\max(C_1,C_2)$; $C'$ is completely determined by $R,B,\gamma$ and $G(1)$. 

This inequality can be sharpened further by exploiting its non-homogeneous nature. The set
$\mathcal{B}_{\gamma}(R)$ is invariant under scaling: if $f \in \mathcal{B}_{\gamma}(R)$, 
then so are all multiples of $f$. If, given $f,g\in\mathcal{B}_{\gamma}(R)$, 
we write the inequality (\ref{intermediate}) for $f'=Mf$, $g'=Mg$, where $M \in \R_+$ is
to be fixed below, then we find, upon dividing both sides by $M \neq 0$, 
\begin{eqnarray*}
\inf_{|\alpha|=1}\|f-\alpha g\|
&\leq& C'\, M^{-1}\, \left[\,1\,+\,M\,(\|f\|+\|g\|)\,\right]
M^{\frac{\gamma-1}{\gamma}}\,
\|\mathcal{A}_{\Phi}(f)-\mathcal{A}_{\Phi}(g)\|^{\frac{\gamma-1}{\gamma}}\\
&=& C'\,M^{-\frac{1}{\gamma}}\,\left[\,1\,+\,M\,(\|f\|+\|g\|)\,\right]
\|\mathcal{A}_{\Phi}(f)-\mathcal{A}_{\Phi}(g)\|^{\frac{\gamma-1}{\gamma}}~.
\end{eqnarray*}
Since this inequality holds for all $M \in \R_+$, it holds in particular
for the value $M=\left[(\gamma -1)\left(\|f\|+\|g\|\right) \right]^{-1}$ that minimizes the 
right hand side. We obtain
\[
\inf_{|\alpha|=1}\|f-\alpha g\|
\leq C'\, \gamma \, (\gamma -1)^{\frac{1-\gamma}{\gamma}}\, 
\left(\|f\|+\|g\|\right)^{\frac{1}{\gamma}}\, 
\|\mathcal{A}_{\Phi}(f)-\mathcal{A}_{\Phi}(g)\|^{\frac{\gamma-1}{\gamma}}
\]
\end{proof}
\begin{remark}
We note that although {\rm (\ref{approx-ineq})} describes a type of H\"{o}lder continuity 
(and thus uniform continuity) for
$\mathcal{A}_{\Phi}$, when restricted to 
$\mathcal{B}_{\gamma}(R)$, with H\"{o}lder exponent $(\gamma-1)/\gamma$,
it does not establish Lipschitz continuity (which would require H\"{o}lder exponent 1). 
So far, most papers on the stability of phase
retrieval have focused on showing Lipschitz continuity;
we do not know whether Lipschitz bounds are possible within our
framework, or whether these
weaker H\"{o}lder-type bounds are the strongest possible here.
\end{remark}

This theorem complements Theorem \ref{main}: even though
uniformly stable phase retrieval is never possible in 
infinite-dimensional $\mathcal{H}$, Theorem \ref{approx}
establishes that stable phase retrieval is possible for elements of
$\mathcal{H}$ that can be approximated sufficiently well by
finite-dimensional expansions, and quantifies the ``extent''
of this restricted stability. 

Note that we did not require $\Phi$ to do phase retrieval for $\mathcal{H}$ in the statement of Theorem \ref{approx}. As the following proposition shows, this is, in fact, not necessary.

\begin{proposition}\label{Riesz}
For any infinite-dimensional separable Hilbert space $\mathcal{H}$ there exists a Riesz basis $\Phi=\{\varphi_n\}_{n\in\mathbb{N}}$ and a sequence of subspaces $\{V_m\}$ so that $(\Phi,\{V_m\})$ satisfy the hypotheses of \rm Theorem {\rm\ref{approx}}.
\end{proposition}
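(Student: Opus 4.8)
The plan is to realize $\Phi$ as a small perturbation of an orthonormal basis, chosen so that for each $m$ the orthogonal projections onto $V_m$ of a \emph{finite} sub-collection of the $\varphi_n$ already form a phase-retrievable frame for $V_m$. Once this is arranged, Proposition \ref{finite} supplies a finite stability constant $C_m$ for reconstruction within $V_m$, and one sets $G(m)=m+\max\{C_1,\dots,C_m\}$, which is strictly increasing and tends to $\infty$.

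Here is the construction. Fix an orthonormal basis $\{e_n\}_{n\in\N}$ of $\mathcal{H}$ and set $V_m=\mathrm{span}\{e_1,\dots,e_m\}$, so that $\dim V_{m+1}>\dim V_m$. For each $m$ choose a finite phase-retrievable frame $\{w^{(m)}_1,\dots,w^{(m)}_{K_m}\}$ for $V_m$ consisting of unit vectors (such frames exist for $\C^m$ and for $\R^m$, with $K_m\geq 1$). Choose pairwise disjoint finite sets $S_m\subseteq\N$ with $|S_m|=K_m$ and $\min S_m>m$ (easily arranged since $\sum_{m'<m}K_{m'}\geq m-1$), fix bijections $i_m\colon S_m\to\{1,\dots,K_m\}$, and define
\[
\varphi_n\;=\;e_n+\epsilon_n\,w^{(m)}_{i_m(n)}\ \ \text{for }n\in S_m, \qquad \varphi_n\;=\;e_n\ \ \text{for }n\notin\textstyle\bigcup_m S_m,
\]
with $\epsilon_n=2^{-n}$. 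Writing $Te_n=\varphi_n$ we have $T=I+E$ where $\|E\|^2\leq\|E\|_{\mathrm{HS}}^2=\sum_n\|Ee_n\|^2\leq\sum_n\epsilon_n^2<1$; hence $T$ is bounded with bounded inverse, so $\Phi$ is a Riesz basis, and in particular a frame with bounds $(1-\|E\|)^2$ and $(1+\|E\|)^2$.

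It remains to verify the stability hypothesis. Fix $m$, let $P_m$ be the orthogonal projection onto $V_m$, and set $\Psi_m=\{P_m\varphi_n\}_{n\in\N}$. For $f\in V_m$ one has $\langle f,\varphi_n\rangle=\langle f,P_m\varphi_n\rangle$, so $\mathcal{A}_{\Phi}(f)$ and $\mathcal{A}_{\Psi_m}(f)$ coincide coordinatewise. Since $\min S_m>m$, we get $P_m\varphi_n=\epsilon_n w^{(m)}_{i_m(n)}$ for every $n\in S_m$; as the $\epsilon_n$ are nonzero and rescaling individual frame vectors by nonzero scalars does not affect phase retrieval, $\{P_m\varphi_n\}_{n\in S_m}$ does phase retrieval for $V_m$, and hence so does $\Psi_m$. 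Using $\|P_m\varphi_n\|\leq\epsilon_n$ for $n>m$ one sees $\Psi_m$ has a finite upper frame bound, and it has a positive lower bound because $\{P_m\varphi_n\}_{n\in S_m}$ spans $V_m$ (any phase-retrievable collection is spanning); thus $\Psi_m$ is a frame for the finite-dimensional space $V_m$ that does phase retrieval, and Proposition \ref{finite} gives $C_m<\infty$ with $\inf_{|\alpha|=1}\|f-\alpha g\|\leq C_m\|\mathcal{A}_{\Psi_m}(f)-\mathcal{A}_{\Psi_m}(g)\|=C_m\|\mathcal{A}_{\Phi}(f)-\mathcal{A}_{\Phi}(g)\|$ for all $f,g\in V_m$. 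With $G(m)=m+\max\{C_1,\dots,C_m\}$ all hypotheses of Theorem \ref{approx} are met.

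The only input that is not routine bookkeeping is the classical existence, for each $m$, of a finite phase-retrievable frame for $\C^m$ (resp.\ $\R^m$). The point requiring care is the placement of the blocks: the condition $\min S_m>m$ is precisely what makes $P_m\varphi_n=\epsilon_n w^{(m)}_{i_m(n)}$ hold exactly for $n\in S_m$, and the freedom to take the perturbations $\epsilon_n$ as small as we like is what decouples the Riesz-basis requirement from the phase-retrieval requirement.
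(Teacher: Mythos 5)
Your construction is essentially the paper's own proof: perturb an orthonormal basis $\{e_n\}$ by small copies of finite phase-retrievable frames for $V_m=\mathrm{span}\{e_1,\dots,e_m\}$ placed on index blocks lying beyond $m$, so that for $f\in V_m$ the inner products $\langle f,\varphi_n\rangle$ on that block see only the phase-retrievable part, then invoke Proposition \ref{finite} and take a running maximum for $G$. The only differences are cosmetic (explicit $\epsilon_n=2^{-n}$ with a Neumann-series argument for the Riesz basis property in place of the cited perturbation theorem, and adding $m$ to $G(m)$ to guarantee $G\to\infty$, which is a slightly cleaner touch), and the argument is correct.
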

\begin{proof}
Let $\{e_n\}_{n\in\mathbb{N}}$ be an orthonormal basis for $\mathcal{H}$ and for each $m\in\mathbb{N}$ let $V_m=\mathrm{span}\{e_n\}_{n=1}^m$.  For each $m$ choose  $\Psi_m$ to be a finite set of vectors in $V_m$ that does phase retrieval for $V_m$; we know that 
$d_m:=\# V_m \geq r\,m$, where $r=2$ or 
$r=4$, according to whether 
$\mathcal{H}$ is a real or complex Hilbert space. Now number the vectors in the successive $\Psi_m$ consecutively, starting at $n=2$, 
so that $\{\psi_n\}_{n=D_{m-1}+2}^{D_{m}+1} = \Psi_m$ for each $m \in \mathbb{N}$, where
$D_m=\sum_{k=1}^{m} d_k$ for $m \geq 1$ and $D_0=0$. Without loss of generality, we can normalize the
vectors in each $\Psi_m$ such that
$$
\sum_{n=2}^\infty\|\psi_n\|^2<\epsilon\,.
$$
with $\epsilon$ to be fixed below.
If we let $\varphi_n=e_n+\psi_n$ for $n \geq 2$ and $\varphi_1=e_1$, 
then
$$
\sum_{n\in\mathbb{N}}\|\varphi_n-e_n\|^2=\sum_{n=2}^\infty\|\psi_n\|^2<\epsilon,
$$
so for $\epsilon$ sufficiently small $\Phi=\{\varphi_n\}_{n\in\mathbb{N}}$ is a Riesz basis for $\mathcal{H}$ (see, e.g., Theorem 15.3.2 in \cite{Ole}).

Also, if $f\in V_m$ then  $\langle f,e_n\rangle=0$ for  
$n \in \{D_{m-1}+2,\ldots ,D_m,D_m+1  \}$,
since $D_{m-1}+1 \geq 1+r/2 \,m(m-1) \geq m$; consequently $\langle f,\varphi_n\rangle=\langle f,\psi_n\rangle$ for $f\in V_m$ and $D_{m-1}+1<n\leq D_{m}+1$. This means that $\{\varphi_n\}_{n=D_{m-1}+2}^{D_{m}+1}$ does phase retrieval for $V_m$, so we can apply Proposition \ref{finite} to find $C_m<\infty$ so that
$$
\inf_{|\alpha|=1}\|f-\alpha g\|\leq C_m\|\mathcal{A}_{\Phi}(f)-\mathcal{A}_{\Phi}(g)\|
$$
for every $f,g\in V_m$. Since this can be done for each $m\in\mathbb{N}$ it follows that $(\Phi,\{V_m\})$ satisfies Theorem \ref{approx} with $G(m)=\max_{1\leq k\leq m}C_k$.
\end{proof}

Since Riesz bases {\em never} do phase retrieval, Proposition 
\ref{Riesz} does indeed show that a frame in an infinite-dimensional 
Hilbert space need not do phase retrieval itself in order to satisfy all the conditions in Theorem \ref{approx}. On the other hand, if $\Phi$ does phase retrieval for $\mathcal{H}$ and $\{V_m\}_{m\in\mathbb{N}}$ is any sequence of finite-dimensional subspaces with increasing dimensions, then it follows from Proposition \ref{finite} that 
the pair $(\Phi,\{V_m\}_{m\in\mathbb{N}})$ satisfies the hypotheses of Theorem \ref{approx} for some function $G(m)$; see
also Proposition \ref{propp} below.

In the formulation of Theorem \ref{approx}, we used the infinite 
sequences 
$\mathcal{A}_{\Phi}(f):= \left(|\langle f, \varphi_n \rangle| \right)_{n \in \mathbb{N}}$, for $f \in V_m$, even though this is 
surely overkill for elements $f$ in 
the finite-dimensional spaces $V_m$. 
As the following proposition shows, one can, at little cost, restrict 
the frame to an appropriate finitely-truncated set of vectors for each $V_m$: if $\{\Phi,\{V_m\}_{m\in\mathbb{N}}\}$ satisfy Theorem \ref{approx} for the function $G(m)$, then there is a $N(m)$ for each $m$ so that the $(\{\varphi_n\}_{n=1}^{N(m)},\{V_m\}_{m\in\mathbb{N}})$ satisfy Theorem \ref{approx} for some function $H(m)\geq G(m)$. 

\begin{proposition}\label{propp}
Let $\mathcal{H}$ be an infinite-dimensional separable Hilbert space and let $\{\varphi_n\}_{n\in\mathbb{N}}$ be a frame for $\mathcal{H}$.  Let $V$ be a finite-dimensional subspace of $\mathcal{H}$ and suppose $\{P_V\varphi_n\}_{n\in\mathbb{N}}$ does phase retrieval for $V$, then there is an $N(V)<\infty$ so that $\{P_V\varphi_n\}_{n=1}^{N(V)}$ does phase retrieval for $V$.
\end{proposition}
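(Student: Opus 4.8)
The plan is to linearize the phase-retrieval condition, in the spirit of the proof of Proposition~\ref{finite}, and then to use that a nested decreasing chain of subspaces of a finite-dimensional vector space must stabilize. Write $\psi_n = P_V\varphi_n$, and let $\mathbb{H}_V$ denote the finite-dimensional real vector space of self-adjoint operators on $V$ (Hermitian if $\mathcal{H}$ is complex, symmetric if it is real). For $N\in\mathbb{N}$ set $K_N=\{X\in\mathbb{H}_V:\langle X\psi_n,\psi_n\rangle=0\text{ for }1\le n\le N\}$ and $K_\infty=\bigcap_{N\ge 1}K_N$; these are linear subspaces of $\mathbb{H}_V$, and $K_N$ is precisely the kernel of the $\mathcal{A}^2$-type map of Proposition~\ref{finite} associated with the finite collection $\{\psi_n\}_{n=1}^N$. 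By Lemma~9 of \cite{BCMN14}, the collection $\{\psi_n\}_{n=1}^N$ does phase retrieval for $V$ if and only if $K_N$ contains no nonzero operator of rank at most~$2$; the same equivalence, with the same proof, holds for the countable collection $\{\psi_n\}_{n\in\mathbb{N}}$ with $K_\infty$ in place of $K_N$ (in one direction, a pair $f\not\sim g$ witnessing failure of phase retrieval yields the nonzero rank-$\le 2$ operator $ff^*-gg^*$ in the relevant kernel; in the other, a nonzero rank-$\le 2$ kernel element is split, according to the signs of its eigenvalues, into such an $ff^*-gg^*$ or into a nonzero positive operator, and in the latter case every $\psi_n$ annihilates some nonzero vector, so phase retrieval again fails).

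Granting this, I would first note that $K_1\supseteq K_2\supseteq\cdots$ is a nested decreasing sequence of subspaces of the finite-dimensional space $\mathbb{H}_V$, with $\bigcap_{N\ge 1}K_N=K_\infty$. The dimensions $\dim K_N$ form a nonincreasing sequence of nonnegative integers, so they are eventually constant: there is $N_0$ with $\dim K_N=\dim K_{N_0}$ for all $N\ge N_0$. Since the $K_N$ are nested and a subspace contained in another of the same finite dimension equals it, $K_N=K_{N_0}$ for every $N\ge N_0$, and hence $K_{N_0}=\bigcap_{N\ge 1}K_N=K_\infty$.

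To conclude: since $\{\psi_n\}_{n\in\mathbb{N}}=\{P_V\varphi_n\}_{n\in\mathbb{N}}$ does phase retrieval for $V$, the subspace $K_\infty$ --- equivalently $K_{N_0}$ --- contains no nonzero operator of rank at most~$2$; by the characterization above, the finite collection $\{P_V\varphi_n\}_{n=1}^{N_0}$ then does phase retrieval for $V$, and one may take $N(V)=N_0$.

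I do not expect a serious obstacle here; the one point to get right is to resist a naive compactness argument. One might try to work directly with the sets of pairs $(f,g)\in V\times V$ satisfying $|\langle f,\psi_n\rangle|=|\langle g,\psi_n\rangle|$ for $n\le N$ and $f\not\sim g$: these are nested and decreasing in $N$ with empty intersection, but $V\times V$ is not compact and the condition $f\not\sim g$ is not closed, so one cannot immediately conclude that some finite stage is already empty. Replacing $\mathcal{A}_{\Phi}$ by the linear map $X\mapsto(\langle X\psi_n,\psi_n\rangle)_n$ on $\mathbb{H}_V$ removes this difficulty, because the relevant obstruction then lives inside a nested chain of honest linear subspaces of a finite-dimensional space, which stabilizes automatically.
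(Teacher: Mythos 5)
Your proof is correct and is essentially the paper's argument: both lift the problem to the finite-dimensional real space of Hermitian operators on $V$, invoke the rank-$\le 2$ kernel characterization of phase retrieval (Lemma~9 of \cite{BCMN14}), and conclude from $\dim\mathbb{H}_V\le M^2<\infty$ that only finitely many of the linear constraints $\langle X\psi_n,\psi_n\rangle=0$ matter. The only (cosmetic) difference is that you phrase the finite-dimensionality step as stabilization of the decreasing chain of annihilators $K_N$, whereas the paper selects a finite subset of $\{\psi_n\psi_n^*\}_{n\in\mathbb{N}}$ spanning their span --- these are orthogonal-complement duals of the same fact, with your version having the minor bonus of directly producing an initial segment $\{1,\dots,N_0\}$ rather than an arbitrary finite index set.
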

\begin{proof}
For notational convenience let $\psi_n=P_V\varphi_n$ for each $n$ and let $\Psi=\{\psi_n\}_{n\in\mathbb{N}}$. Also, suppose $\dim(V)=M$. Recall from the proof of Proposition \ref{finite} that $\Psi$ does phase retrieval (for V) if and only if $\ker{\mathcal{A}_{\Psi}^2}$ does not contain any rank 1 or 2 matrices. But $\ker{\mathcal{A}_{\Psi}^2}=\mathrm{span}\{\psi_n\psi_n^*\}_{n\in\mathbb{N}}^{\perp}$, and $N:=\dim(\mathrm{span}\{\psi_n\psi_n^*\}_{n\in\mathbb{N}})\leq\dim(\mathbb{H}_M)=M^2$. Therefore there exists a subset $I\subset\mathbb{N}$ with $|I|=N$ and $\mathrm{span}\{\psi_n\psi_n^*\}_{n\in I}=\mathrm{span}\{\psi_n\psi_n^*\}_{n\in\mathbb{N}}$, which means $\ker\mathcal{A}_{\Psi_I}^2=\ker\mathcal{A}_{\Psi}^2$ and so $\{\psi_n\}_{n\in I}$ does phase retrieval for $V$.
\end{proof}

Note that in the requirement that 
$\{P_V\varphi_n\}_{n\in\mathbb{N}}$ does phase retrieval for $V$, 
i.e. that, up to global phase, any $f \in V$ can be reconstructed from
the sequence $ \left(|\langle f, P_V \varphi_n \rangle | \right)_{n \in \mathbb{N}}$, one can equally well drop the projector $P_V$, since
$\langle f, P_V \varphi_n \rangle =\langle f, \varphi_n \rangle  $. 

The frame used in the proof of Proposition \ref{Riesz} was rather
contrived, and would not be used in any concrete applications. It is 
reasonable to wonder how the function $G$ of Theorem \ref{approx}
behaves for choices of
$\Phi$ and $\{V_m\}$ of practical interest. It turns out  
that it may grow very fast, as illustrated by the following example.
\begin{example}
Consider 
the space $\mathcal{H}$ of real, square integrable functions with bandlimit $\pi$, i.e. with Fourier transform supported on 
$[-\pi,\pi]$, and the functions $\varphi_n \in \mathcal{H}$ defined, for $n \in \mathbb{Z}$, by
\begin{eqnarray*}
\varphi_n(x)=&\frac{\sin\left(\pi(x-\frac{n}{4})\right)}{\pi(x-\frac{n}{4})}&\mbox{ if }~~ x\neq \frac{n}{4}\\
&1  &\mbox{ if }~~ x= \frac{n}{4}~~.
\end{eqnarray*}
Note that the $\varphi_{4\ell}$, $\ell \in \mathbb{Z}$ constitute
the standard ``Shannon'' orthonormal basis of $\mathcal{H}$.
As shown in {\rm \cite{T11}}, $\Phi:=\{\varphi_n\}_{n \in \mathbb{Z}}$ does phase retrieval for
$\mathcal{H}$.\\
Define the spaces $V_n \subset \mathcal{H}$ as the $(2n+1)$-dimensional
subspaces of $\mathcal{H}$ spanned by the $\varphi_{4\ell}$, $-n\leq \ell \leq n$. Then there exist
functions $f_m$, $g_m$ in
$V_{2m}$ with sufficiently fast inverse polynomial decay such
that, for some $m$-independent constant $C>0$, the following
inequality holds for all $m\in \mathbb{N}$:
\begin{equation}\label{bd_below}
\inf_{\alpha \in \{1,-1\}} \|f_m-\alpha g_m\| > C (m+1)^{-1} 2^{3 m} \, 
\|\mathcal{A}_{\Phi}(f)-\mathcal{A}_{\Phi}(g)\|.
\end{equation}
\end{example}

\begin{proof}
We construct $f_m$ and $g_m$ explicitly. 

We start by defining, for
$k \in \mathbb{N}$, the functions $s_k \in \mathcal{H}$ recursively
by setting $s_0(x)=\varphi_0(x)$, and, for $k>0$, 
$s_k(x):=s_{k-1}(x+1)+s_{k-1}(x)$; equivalently
$s_k(x)= \sum_{\ell=0}^k \binom{k}{l} \varphi_0(x+\ell)$. By induction on $k$, one easily
checks that, for $x \not\in \{0,-1,\ldots,-k\}$,
\begin{equation}\label{explicit}
s_k(x)=\frac{1}{\pi}\, \frac{k!\, \sin \pi x}{x(x+1)\ldots(x+k)}\,;
\end{equation}
for $x\in \{0,-1,\ldots,-k\}$, i.e. $x=-n$ with $n\in \{0,1,\ldots,k\}$,
one has $s_k(-n)=\binom{k}{n}$. 

Next, we define
\[
f_m(x):=s_m(x+m)+s_m(x-2m)~~ \mbox{ and } ~~ g_m(x):=s_m(x+m)-s_m(x-2m)~;
\]
equivalently, in a more symmetric form that makes clear that  
$f_m$, $g_m$ are both in $V_{2m}$,
\begin{eqnarray*}
f_m(\cdot)&=&\sum_{\ell=0}^m \binom{m}{\ell}\left[\varphi_0(\cdot+2m-\ell)+ \varphi_0(\cdot-2m+\ell)\right]= \sum_{\ell=0}^m \binom{m}{\ell}
\left[\varphi_{4(-2m+\ell)} + \varphi_{4(2m-\ell)} \right]\\
g_m(\cdot)&=&\sum_{\ell=0}^m \binom{m}{\ell}\left[\varphi_0(\cdot+2m-\ell)- \varphi_0(\cdot-2m+\ell)\right]= \sum_{\ell=0}^m \binom{m}{\ell}
\left[\varphi_{4(-2m+\ell)} - \varphi_{4(2m-\ell)}  \right]~.
\end{eqnarray*}

Then 
\[
\inf_{\alpha \in \{1,-1\}} \|f_m-\alpha g_m\|^2 =
4 \min \left( \|s_m(\cdot-2m)\|^2,\|s_m(\cdot+m)\|^2 \right) = 
4 \sum_{\ell=0}^m \left[ \binom{m}{\ell} \right]^2 ~.
\]
Combining the identity $\sum_{\ell=0}^m \binom{m}{\ell} = 2^m$  with
$\sum_{\ell=0}^m \left(a_\ell\right)^2 \leq \left[\sum_{\ell=0}^m a_\ell \right]^2 \leq (m+1) \sum_{\ell=0}^m \left(a_\ell\right)^2$, valid for 
all non-negative numbers $a_0,\ldots,a_m$, and making use of the 
orthonormality of the $\varphi_{4n}$ in $\mathcal{H}$, we have thus
\begin{equation}\label{ex_1}
\inf_{\alpha \in \{1,-1\}} \|f_m-\alpha g_m\|^2 \geq 4 (m+1)^{-1} 2^{2m} ~.
\end{equation}

We now estimate $\|\mathcal{A}_{\Phi}(f_m)-\mathcal{A}_{\Phi}(g_m)\|^2$. 
Since, for all $h \in \mathcal{H}$, $\langle h, \varphi_\ell \rangle=
h(\ell/4)$, we have
\begin{eqnarray*}
\|\mathcal{A}_{\Phi}(f_m)-\mathcal{A}_{\Phi}(g_m)\|^2
&=& \sum_{n \in \mathbb{Z}}\left(\left|f_m(n/4)\right| - \left| g_m(n/4) \right|  \right)^2 \\
&=& \sum_{k \in \mathbb{Z}}\left[
\left(\left|f_m(k)\right| - \left| g_m(k) \right|\right)^2
+\left(\left|f_m\left(k+\frac{1}{4}\right)\right| - \left| g_m\left(k+\frac{1}{4}\right)\right|\right)^2 \right.\\
&&\left.
+\left(\left|f_m\left(k+\frac{1}{2}\right)\right| - \left| g_m\left(k+\frac{1}{2}\right) \right|\right)^2  
+ \left(\left|f_m\left(k-\frac{1}{4}\right)\right| - \left| g_m\left(k-\frac{1}{4}\right) \right|\right)^2
\right]\\
&=:& T_0 + T_1 +T_2 +T_{-1} ~.
\end{eqnarray*}
Since $s_m(k+m)=0$ unless $k\in\{-m,-m-1,\ldots,-2m\}$ and 
$s_m(k-2m)=0$ unless $k\in\{2m, 2m-1,\ldots,m\}$, we have $|f_m(k)|
=|g_m(k)|$ for all $k \in \mathbb{Z}$, so that $T_0=0$. 

To estimate the other three terms $T_1$, $T_2$ and $T_{-1}$, we first
observe that, for $a,b \in \mathbb{R}$, 
\[
(|a+b|-|a-b|)^2 = 4 \min(|a|,|b|)^2~.
\]
Using the explicit formula (\ref{explicit}) for $s_k(x)$, we can also rewrite $f_m$ and $g_m$ as 
\begin{eqnarray*}
f_m(x)&=&\frac{m!\sin \pi x }{\pi}\left[\frac{(-1)^m}{(x+m)(x+m+1) \ldots (x+2m)} + \frac{1}{(x-m)(x-m-1)\ldots(x-2m)} \right]\\
g_m(x)&=&\frac{m!\sin \pi x }{\pi}\left[\frac{(-1)^m}{(x+m)(x+m+1) \ldots (x+2m)} - \frac{1}{(x-m)(x-m-1)\ldots(x-2m)} \right]~.
\end{eqnarray*}
Combining these two observations, we find that 
\[
\left(\left|f_m(x)\right| - \left| g_m(x) \right|\right)^2
= \frac{4 (m!)^2 (\sin \pi x)^2}{\pi^2 \prod_{s=m}^{2m}(|x|+s)^2}~, 
\]
and thus
\begin{eqnarray*}
\|\mathcal{A}_{\Phi}(f_m)-\mathcal{A}_{\Phi}(g_m)\|^2 &=&
\frac{4 (m!)^2}{\pi^2}\sum_{k\in\mathbb{Z}}
\left [\frac{1}{\prod_{s=m}^{2m}(|k+1/2|+s)^2} +
\frac{1}{\prod_{s=m}^{2m}(|k+1/4|+s)^2}\right]\\
&\leq&\frac{8 (m!)^2}{\pi^2}\sum_{k=0}^{\infty}\left [\frac{1}{\prod_{s=m}^{2m}(k+1/2+s)^2} +
\frac{1}{\prod_{s=m}^{2m}(k+1/4+s)^2}\right]\\
&\leq&\frac{16 (m!)^2}{\pi^2}\sum_{k=0}^{\infty}\frac{1}{\prod_{s=m}^{2m}(k+s)^2}
\leq\frac{16 (m!)^2}{\pi^2}\sum_{k=0}^{\infty}\frac{[m!]^2}{(m+k)^2 [(2m)!]^2} \\
&\leq& \frac{16}{\pi^2} \left[\binom{2m}{m} \right]^{-2} \sum_{k=0}^{\infty}\frac{1}{(1+k)^2} 
\leq \frac{32}{\pi^2} \left[\binom{2m}{m} \right]^{-2}~.
\end{eqnarray*}
By Stirling's formula, $\binom{2m}{m}\sim  S\, m^{-1/2}\,2^{2m}$ as $m \rightarrow \infty$, for some $m$-independent $S>0$, so that, for sufficiently large $m$, 
\[
\|\mathcal{A}_{\Phi}(f_m)-\mathcal{A}_{\Phi}(g_m)\|^2 \leq \frac{36 \,m}{\pi^2 S^2} \,2^{-4m}~,
\]
and consequently, combining this with (\ref{ex_1}),
\[
\frac{\inf_{\alpha \in \{1,-1\}} \|f_m-\alpha g_m\|}
{\|\mathcal{A}_{\Phi}(f_m)-\mathcal{A}_{\Phi}(g_m)\|} \geq 
\frac{\pi S\cdot 2^{3m}}{\,3\,(m+1)}~. 
\]
\end{proof}

Note that since the frame $\Phi$ in this example does phase retrieval for $\mathcal{H}$ (see \cite{T11}), it must do phase retrieval for each $V_m \subset \mathcal{H}$, meaning that (by the argument used in the proof of Proposition \ref{propp}) there must indeed exist $N(m)$
and $H(m)$
such that, for all $f,g \in V_m$
\begin{eqnarray*}
\inf_{|\alpha|=1}\|f-\alpha g\|
&\leq & H(m)\,\left[ \sum_{n=1}^{N(m)}\left(\left| \langle f, \varphi_n \rangle \right| -  \left| \langle g, \varphi_n \rangle  \right|   \right)^2   \right]^{1/2}\\
&\leq & H(m)\,\|\mathcal{A}_{\Phi}(f)-\mathcal{A}_{\Phi}(g)\|~,
\end{eqnarray*}
i.e. (\ref{approx-ineq}) must be satisfied for 
$G(m)=\max_{1\leq k\leq m}H(k)$. 
The computation above tells us that, no matter how large we pick the
$N(m)$, 
$G$ must
grow at least as fast as indicated by (\ref{bd_below}). 

{\bf Acknowledgments}\\
The authors wish to thank the reviewer for a careful reading of the paper, 
pointing out several inaccuracies, and suggesting a shortcut for the
proof of Theorem \ref{main}.

\end{document}